\newtheorem{algorithm}{Algorithm}
\begin{document}
\title{A greedy algorithm for the minimization of a ratio of same-index element sums from two positive arrays}
\author{Alexander Lozovskiy\thanks{Department of Mathematics, Texas A\&M University; {\tt lozovskiy@math.tamu.edu}}}

\maketitle

\begin{abstract}
Consider two ordered positive real number arrays of equal size. The problem is to find such set of indices of given size that the ratio of the sums of the array elements with those indices is minimized.
In this work, in order to mitigate the exponential complexity of the brute force search, we present a greedy algorithm applied to the search of such an index set.
The main result of the paper is the theorem that states that the algorithm eliminates from candidates all index sets that do not contain any elements from the greedily selected set. We additionally prove exactness for a particular case of a ratio of the sums of only two elements. 
\end{abstract}

\begin{keywords}
combinatorial optimization, greedy algorithm, ratio of sums
\end{keywords}

\section{Problem statement and motivation example}
Consider two finite sequences(arrays) $a = \{a_1, a_2, ..., a_N\}$ and $b = \{b_1, b_2, ..., b_N\}$ such that $a_{i}, b_{i}\in\mathbb{R}$ and $a_i, b_i > 0$ for all $1\leqslant i\leqslant N$. Let positive integer $n$ satisfy $n < N$ and let $I = \{i_1, i_2, ... , i_n\}$ denote a set of distinct indices, such that $1\leqslant i_k \leqslant N$ for all $1\leqslant k \leqslant n$. We distinguish between different index sets by putting a superscript above: $I^{(1)}$, $I^{(2)}$, etc. For example,
\[
 I^{(3)} = \{i_1^{(3)}, i_2^{(3)}, ..., i_n^{(3)}\}.
\]

Let $\mathscr{I} = \{I^{(1)}, I^{(2)}, ..., I^{(M)}\}$ collect all possible such sets. Clearly, the cardinality of $\mathscr{I}$ is
\[
 |\mathscr{I}| = M = \binom{N}{n} = \frac{N!}{n!(N-n)!}.
\]

We wish to find such index set $I^{(m)}$ in $\mathscr{I}$, that the fraction 
\[
 \frac{\sum_{k=1}^{n}a_{i_k^{(m)}}}{\sum_{k=1}^{n}b_{i_k^{(m)}}}
\]
is minimized. In other words, 
\[
 m = \underset{1\leqslant s\leqslant M}{\mbox{argmin}}\frac{a_{i_1^{(s)}}+a_{i_2^{(s)}} + ... + a_{i_n^{(s)}}}{b_{i_1^{(s)}} + b_{i_2^{(s)}} + ... + b_{i_n^{(s)}}}.
\]

For example, let $N = 4$ and $n = 2$ and
\[
 a = \{3, 2, 5, 7\}, ~~~ b = \{6, 2, 2, 8\}.
\]
The index set $I = \{1, 2\}$, corresponding to elements $a_1 = 3$, $a_2 = 2$ and $b_1 = 6$, $b_2 = 2$, corresponds to the least fraction of all $\frac{4!}{2! 2!} = 6$ possible ones:
\[
 \frac{\sum_{k=1}^{n}a_{i_k}}{\sum_{k=1}^{n}b_{i_k}} = \frac{3 + 1}{6 + 2} = \frac{1}{2}.
\]

\textbf{Remark.}
 Note that the indices for the numerator and the denominator of the fractions of interest are the same, and this is a core constraint of the problem, that makes it non-trivial, as opposed to the case when elements from $a$ and $b$ can be chosen independently. In the latter case, the trivial sequential search for the minimal sum of elements from $a$ and the maximal sum of the elements from $b$ will result in complexity $O(nN+n^{2})$.

\subsection{Motivation example}

Despite being an independent combinatorial optimization problem in its own right, we present one non-trivial motivation example for it. Such minimization may stem from the reduced-order numerical modeling of nonlinear partial differential equations. Consider the problem of the best approximation in $2$-norm of a vector $\vec f\in \mathbb{R}^{N}$ by a subspace $\mathbb{S}\subset\mathbb{R}^{N}$, such that $\mathbb{S} = $span$(U)$, where $U$ consists of $L<N$ orthonormal vectors $\vec u_{j}\in\mathbb{R}^{N}$. Obviously, the best approximation $\hat{\vec f}\in\mathbb{S}$ in $2$-norm is the projection via the least squares: 
\[
\hat{\vec f} =   UU^{T}\vec f.
\]
In computational applications, the vector $\vec f$ can be a time-dependent term arising due to the space-time discretization of a nonlinear term in a partial differential equation \cite{Antoulas_Sorensen_01}. The evaluation of this term at every time step or Newton iteration may be time consuming \cite{POD_DEIM} and does not agree with a reduced-order modeling mission to provide a computationally cheap scheme. Therefore, an approximation of $\vec f$ other than the projection $\hat{\vec f}$ is required, since ealuation of $\hat{\vec f}$ implies the evaluation of all the $N$ entries of $\vec f$. 

One strategy called hyper-reduction \cite{Amsallen, Gugercin2} is to employ an incomplete vector evaluation of $\vec f$ when building its approximation. A method known as Gappy POD \cite{Everson_Sirovich_95, Willcox_06} finds the approximation based on only few evaluated entries of $\vec f$. If we choose a limited amount $n\ll N$ of entries for evaluation and $n\geqslant L$, this creates a permutation (selection)
matrix $P$ of size $N\times n$. Let $\vec q = U\vec c$ denote such approximation of $\vec f$ in subspace $\mathbb{S}$. Then
\[
 |\vec f - \vec q|^{2} = |\vec f - \hat{\vec f} + \hat{\vec f} -\vec q|^{2} = |\vec f -\hat{ \vec f}|^{2} + |\hat{\vec f} -\vec q|^{2}
\]
due to the Pythagorean law. The minimization of $|\vec f - \vec q|$ is then equivalent to the minimization of $|\hat{\vec f}-\vec q|$. The Gappy POD method chooses the coefficient vector $\vec c$ by solving the least squares problem
\[
 \vec c = \underset{\vec a\in \mathbb{R}^{n}}{\mbox{argmin}}| P^{T}\vec f -  P^{T} U\vec a|
\]
and so
\[
 \vec c = (U^{T} P P^{T} U)^{-1} U^{T} P P^{T}\vec f
\]
Suppose all instances (snapshots) of $\vec f$ in the application of interest do not leave some subspace $\mathbb{Q}\subset\mathbb{R}^{N}$ \cite{POD_DEIM, Gugercin1},  and $\mathbb{Q} = $span$(U,\hat{U})$, where $\hat{U}$ also consists of orthonormal columns and $U\cap\hat{U} = \{0\}$.

So this creates
\begin{equation}\label{E:gappyPOD}
\begin{split}
 |\hat{\vec f} -\vec q| = \left| U^{T}\vec f-(( U^{T} P P^{T} U)^{-1} U^{T} P  P^{T} (U U^{T}\vec f + \hat{ U}\hat{ U}^{T}\vec f))\right| =\\ 
\left|( U^{T} PP^{T} U)^{-1} U^{T} P  P^{T}\hat{ U}\hat{ U}^{T}\vec f\right|.
\end{split}
\end{equation}
Since the general approximation strategy is not dependent on $\vec f$ but only on the subspace it resides in, we can isolate $\vec f$ via the inequality
\[
 |\hat{\vec f} -\vec q|\leqslant \left\|( U^{T} P P^{T} U)^{-1} U^{T} P  P^{T} \hat{U}\right\|_2\left|\hat{U}^{T}\vec f\right|.
\]
and focus on constructing such index set for the entries of $\vec f$, i.e. matrix $P$, that they minimize the norm of $( U^{T} P P^{T} U)^{-1} U^{T} P  P^{T} \hat{U}$. The minimization of this norm directly may be challenging. Instead, we can try to minimize its upper bound:
\[
 \left\|( U^{T} P P^{T} U)^{-1} U^{T} P  P^{T} \hat{U}\right\|_2\leqslant \left\|( U^{T} P P^{T} U)^{-1} \right\|_2 \left\| U^{T} P\right\|_2 \left\|P^{T} \hat{U}\right\|_F,
\]
where $\left\|\cdot\right\|_F$ denotes the Frobenius norm of a matrix. 

Consider the case when subspace $\mathbb{S}$ is one-dimensional, i.e. $U = \vec u$ is a single unit vector. Then the aforementioned upper bound turns into
\[
 \frac{\left\|P^{T}\hat{U}\right\|_F}{\left|P^T \vec u\right|}.
\]
Clearly, if this upper bound is squared and we denote $b_{i} = u_{i}^{2}$, $a_{i} = \sum_{j = 1}^{M}\hat{U}_{ij}^{2}$, then we arrive at the problem statement at the beginning of this section: find such index set $\{i_1, i_2, ..., i_n\}$, i.e. selection matrix $P$ of size $N\times n$, that
\[
 \frac{\left\|P^{T}\hat{U}\right\|_F^{2}}{\left|P^T \vec u\right|^{2}} = \frac{a_{i_1}+a_{i_2}+ ... + a_{i_n}}{b_{i_1}+b_{i_2}+ ... + b_{i_n}}
\]
is the smallest fraction of all.

We now present a cheap greedy method for solving the minimization problem involving general positive arrays.
\section{The greedy method} Clearly, only when $n = 1$ or $n = N-1$, the brute force search procedure, which tries fractions corresponding to all possible index sets from $\mathscr{I}$, has cost $O(N)$, i.e. scales linearly with the size of the arrays $N$. For other cases, the cost is superlinear. Moreover, if the number of the array entries $n$ grows with its size as $n = O(N)$, the search cost increases, according to Stirling's approximation \cite{dutka1991early}, as
\[
 O\left(\frac{1}{\sqrt{N}}c^{N}\right),
\]
where $1<c\leqslant 2$, yielding the brute force search an impractical method.

A greedy algorithm  is a well-known heuristic method of combinatorial optimization \cite{tarjan1983data}. It is locally optimal, meaning it chooses the best solution only within a subproblem at the current iteration, and never reconsiders its choices. These two features make it a computationally affordable method.

Here we are proposing a greedy algorithm that has a cost $O(N)$ per iteration. It is presented below as Algorithm \ref{algo:alg}. 

\begin{algorithm}\label{algo:alg}
$\textbf{ }$
\begin{algorithmic}
\State Input: arrays $a = \{a_1, a_2, ..., a_N\} > 0$ and $b = \{b_1, b_2, ..., b_N\} > 0$, a positive integer $n < N$.
\State $j := \underset{1\leqslant k\leqslant N}{\mbox{argmin}}\frac{a_k}{b_k}$
\State $I := \{j\}$
\For {$m = 2:n$}
	\State $j := \underset{k\not\in I}{\underset{1\leqslant k\leqslant N}{\mbox{argmin}}}\frac{a_{i_1} + a_{i_2} + ... + a_{i_{m-1}} + a_k}{b_{i_1} + b_{i_2} + ... + b_{i_{m-1}} + b_k}$
	\State $I := I\cup\{j\}$
\EndFor
\State Output: index set $I$.
\end{algorithmic}
\end{algorithm}

The idea of the algorithm is simple. At iteration $m$, the algorithm sequentially searches for a single index corresponding to the minimization of the fraction with $m$ numbers in both denominator and numerator, sharing the same positions, in which the other $m-1$ indices have been selected at previous iterations. Due to this linear search, the cost of each iteration is proportional to $N$, which grants the total cost $O(nN+n^{2})$ to the entire algorithm. The other benefit is in the fact that at every fraction evaluation we only perform two additions - one on top and the other on the bottom. This is opposed to evaluations during the brute force search, in which we perform up to $2(n-1)$ additions. 

It should be noted that the ratio minimization problem may have more than one solution. The same is true for the greedy method, due to non-uniqueness, in general, of the output of the argmin operation at every iteration. The result of this work is general and applies to all possible solutions.

The main result of the paper is the first statement of Theorem \ref{T:main}.

\section{The main result}
\begin{lemma}\label{T:lemma1}
 Let $i_1, i_2, ..., i_n$ denote the indices chosen by the greedy method \ref{algo:alg} in the order of iterations. Let 
 \[
  q_{k} =\frac{\sum_{j = 1}^{k}a_{i_j}}{\sum_{j = 1}^{k}b_{i_j}} =  \frac{a_{i_1}+a_{i_2} + ... + a_{i_k}}{b_{i_1}+b_{i_2} + ... + b_{i_k}}
 \]
for $1\leqslant k \leqslant n$. Then the sequence $\{q_{k}\}$ is non-decreasing for $1\leqslant k \leqslant n$.
 \end{lemma}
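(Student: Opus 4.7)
The plan is to prove the claim by induction on $k$, using the elementary \emph{mediant inequality}: for any two positive fractions, the mediant $(\alpha+\gamma)/(\beta+\delta)$ always lies between $\alpha/\beta$ and $\gamma/\delta$. Writing $S_a=\sum_{j\leq m-1}a_{i_j}$ and $S_b=\sum_{j\leq m-1}b_{i_j}$, this identifies $q_m=(S_a+a_{i_m})/(S_b+b_{i_m})$ as a quantity squeezed between $q_{m-1}=S_a/S_b$ and the pointwise ratio $a_{i_m}/b_{i_m}$. Consequently, $q_m\geq q_{m-1}$ is \emph{equivalent} to $a_{i_m}/b_{i_m}\geq q_{m-1}$, and my task reduces to proving this latter inequality.

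For the base case $k=2$, by definition $i_1$ globally minimizes $a_k/b_k$, so in particular $a_{i_2}/b_{i_2}\geq a_{i_1}/b_{i_1}=q_1$, and the mediant observation immediately gives $q_2\geq q_1$.

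For the inductive step with $m\geq 3$, let $S'_a$ and $S'_b$ denote the partial sums after only $m-2$ iterations, so that $q_{m-2}=S'_a/S'_b$. Two facts will combine. First, at iteration $m-1$ the algorithm was free to pick $k=i_m$, and the greedy choice of $i_{m-1}$ therefore forces $q_{m-1}\leq (S'_a+a_{i_m})/(S'_b+b_{i_m})$, which after cross-multiplication reads $q_{m-1}S'_b+q_{m-1}b_{i_m}\leq S'_a+a_{i_m}$. Second, the inductive hypothesis $q_{m-2}\leq q_{m-1}$ cross-multiplies to $S'_a\leq q_{m-1}S'_b$. Chaining the two inequalities cancels the $S'_a$ and $q_{m-1}S'_b$ terms and leaves exactly $q_{m-1}b_{i_m}\leq a_{i_m}$, closing the induction via the reformulation above.

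The main obstacle is recognizing that greedy optimality at iteration $m$ \emph{itself} does not compare $q_m$ with $q_{m-1}$ (it only compares $q_m$ against alternative same-level augmented ratios). The argument must therefore reach one step back to iteration $m-1$ and combine its greedy optimality with the inductive monotonicity of the previous quotients to convert an inequality about the augmented ratio involving $i_m$ into one about the simple ratio $a_{i_m}/b_{i_m}$. Once that pairing is in place, the remaining algebra is routine.
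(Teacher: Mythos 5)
Your proof is correct and follows essentially the same route as the paper: induction on $k$, with the inductive step obtained by combining the greedy optimality at iteration $m-1$ (using $i_m$ as an available alternative candidate) with the cross-multiplied form of the inductive hypothesis $q_{m-2}\leqslant q_{m-1}$. The mediant-inequality reformulation ($q_m\geqslant q_{m-1}$ iff $a_{i_m}/b_{i_m}\geqslant q_{m-1}$) is just a cleaner packaging of the same algebra the paper carries out explicitly.
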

\begin{proof}
We shall prove this lemma by induction. Let us compare the first two elements of the sequence. By definition,
\[
 q_{1} = \frac{a_{i_1}}{b_{i_1}}, ~~~q_{2} = \frac{a_{i_1}+a_{i_2}}{b_{i_1}+b_{i_2}}.
\]
Since, according to the greedy algorithm,
\[
 \frac{a_{i_1}}{b_{i_1}}\leqslant \frac{a_{i_2}}{b_{i_2}},
\]
we obtain
\[
 a_{i_1}b_{i_2}\leqslant a_{i_2}b_{i_1},
\]
which is equivalent to 
\[
 a_{i_1}b_{i_1}+a_{i_1}b_{i_2}\leqslant  a_{i_1}b_{i_1}+a_{i_2}b_{i_1},
\]
or 
\[
 \frac{a_{i_1}}{b_{i_1}}\leqslant \frac{a_{i_1}+a_{i_2}}{b_{i_1}+b_{i_2}}.
\]
The base of induction is complete. Now, let us assume the inequality $q_{m-1}\leqslant q_{m}$ holds for some index $m\geqslant 2$, and we wish to show that $q_{m}\leqslant q_{m+1}$.
From
\begin{equation}\label{E:1}
 \frac{a_{i_1}+a_{i_2}+...+a_{i_{m-1}}}{b_{i_1}+b_{i_2}+...+b_{i_{m-1}}}\leqslant \frac{a_{i_1}+a_{i_2}+...+a_{i_{m-1}}+a_{i_m}}{b_{i_1}+b_{i_2}+...+b_{i_{m-1}}+b_{i_m}}
\end{equation}
we equivalently obtain
\begin{equation}\label{E:2}
 b_{i_m}(a_{i_1}+a_{i_2}+...+a_{i_{m-1}})\leqslant a_{i_m}(b_{i_1}+b_{i_2}+...+b_{i_{m-1}}).
\end{equation}
Aslo, due to the greedy index selection at iteration $m$, we have 
\[
  \frac{a_{i_1}+a_{i_2}+...+a_{i_{m-1}}+a_{i_m}}{b_{i_1}+b_{i_2}+...+b_{i_{m-1}}+b_{i_m}}\leqslant \frac{a_{i_1}+a_{i_2}+...+a_{i_{m-1}}+a_{i_{m+1}}}{b_{i_1}+b_{i_2}+...+b_{i_{m-1}}+b_{i_{m+1}}},
\]
which is equivalent to 
\[
 b_{i_{m+1}}(a_{i_1}+a_{i_2}+...+a_{i_{m-1}}+a_{i_m})- a_{i_{m+1}}(b_{i_1}+b_{i_2}+...+b_{i_{m-1}}+b_{i_m})\leqslant 
\]
\[
\leqslant b_{i_m}(a_{i_1}+a_{i_2}+...+a_{i_{m-1}})-a_{i_m}(b_{i_1}+b_{i_2}+...+b_{i_{m-1}}).
\]
The right-hand side of the above inequality is non-positive, due to \eqref{E:2}, and therefore
\[
 b_{i_{m+1}}(a_{i_1}+a_{i_2}+...+a_{i_{m-1}}+a_{i_m})\leqslant a_{i_{m+1}}(b_{i_1}+b_{i_2}+...+b_{i_{m-1}}+b_{i_m}).
\]
Analogously to the way \eqref{E:2} implies \eqref{E:1}, we conclude from the above inequality that
\[
 \frac{a_{i_1}+a_{i_2}+...+a_{i_{m}}}{b_{i_1}+b_{i_2}+...+b_{i_{m}}}\leqslant \frac{a_{i_1}+a_{i_2}+...+a_{i_{m}}+a_{i_{m+1}}}{b_{i_1}+b_{i_2}+...+b_{i_{m}}+b_{i_{m+1}}}.
\]
\end{proof}
\begin{lemma}\label{T:lemma2}
 Consider two arrays of equal size $n$ consisting of positive real numbers $\{x_i\}$, $\{y_i\}$. Then the array $\{z_i\}$ defined as
 \[
  z_i = x_i(y_1 + y_2 + ... + y_n)-y_i(x_1 + x_2 + ... + x_n)
 \]
for all $1\leqslant i\leqslant n$, contains at least one non-positive element. If there are no strictly negative elements in $\{z_i\}$, then
\[
 \frac{y_{i}}{x_{i}} = \mbox{C}
\]
for all $1\leqslant i\leqslant n$, where $C$ is independent of $i$.
\end{lemma}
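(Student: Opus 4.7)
The plan is to observe that the defining expression telescopes nicely when summed: letting $X = \sum_{i=1}^n x_i$ and $Y = \sum_{i=1}^n y_i$, we have $z_i = x_i Y - y_i X$, so
\[
\sum_{i=1}^n z_i = Y\sum_{i=1}^n x_i - X\sum_{i=1}^n y_i = YX - XY = 0.
\]
Once this identity is in hand, both claims of the lemma follow essentially for free.

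First I would use the identity $\sum_i z_i = 0$ to establish existence of a non-positive element: if every $z_i$ were strictly positive, their sum would be strictly positive, contradicting the identity. Hence at least one $z_i \leqslant 0$.

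For the second statement, I would assume that no $z_i$ is strictly negative, so $z_i \geqslant 0$ for all $i$. Combined with $\sum_i z_i = 0$, this forces $z_i = 0$ for every $1 \leqslant i \leqslant n$. Rewriting this as $x_i Y = y_i X$ and dividing by $x_i > 0$ (which is allowed since all $x_i$ are positive) yields $y_i/x_i = Y/X$, independent of $i$. Setting $C = Y/X$ completes the argument.

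There is no real obstacle here; the only thing to be careful about is setting up the notation and explicitly verifying that the telescoping identity $\sum_i z_i = 0$ holds, since both halves of the lemma reduce to that single observation combined with positivity of the $x_i$'s.
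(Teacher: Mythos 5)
Your proposal is correct and follows exactly the same route as the paper: sum the $z_i$ to get zero, deduce that at least one is non-positive, and in the non-negative case conclude all $z_i=0$ and hence $y_i/x_i = Y/X$ for every $i$. Nothing further to add.
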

\begin{proof}
 Let us add all the elements of the array $\{z_i\}$ as shown:
 \[
  z_1 + z_2 + ... + z_n = \sum_{i=1}^{n}x_i\sum_{k = 1}^{n}y_k - \sum_{i=1}^{n}y_i\sum_{k = 1}^{n}x_k = 0.
 \]
Hence, at least one of $z_i$ has to be non-positive in order for this identity to hold.

If there are no strictly negative $z_{i}$, then from $z_1 + z_2 +... + z_n = 0$ it follows that $z_i = 0$ for any $1\leqslant i\leqslant n$, and the second statement of the lemma follows immediately.
\end{proof}

\begin{theorem}\label{T:main}
Consider two arrays $\{a_i\}$, $\{b_i\}$ of equal size $N$ consisting of positive real numbers and let $n$ be such integer that $2\leqslant n<N$. Let $I_n$ denote an index set (one of) with $n$ indices constructed by the greedy method \ref{algo:alg} and $J_n$ the actual minimizing index set (one of) with $n$ indices. If at least one of the equalities
\[
 \frac{a_{j_1}}{b_{j_1}} = \frac{a_{j_2}}{b_{j_2}} = ... = \frac{a_{j_n}}{b_{j_n}}
\]
if false, then $J_n\cap I_n\ne \varnothing$.
If all these equalities are true, then the greedy algorithm is exact, meaning it provides the actual minimizing index set(one of).
\end{theorem}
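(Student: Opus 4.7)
The plan is to handle both statements with a single exchange (swap) argument combined with Lemma~\ref{T:lemma2}. For statement~1 I will prove the stronger claim that the greedy's very first selection $i_1$ itself lies in $J_n$, which immediately gives $J_n \cap I_n \neq \varnothing$. For statement~2 I will show that the common ratio $C$ coincides with the optimal value $V$ and with $\min_k a_k/b_k$, so the greedy is forced (up to tie-breaking) to select only ratio-$C$ elements and thereby produces sum ratio $C = V$.

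Let $V$ denote the optimal sum ratio $\sum_{k\in J_n}a_k / \sum_{k\in J_n}b_k$ and write $r_k := a_k/b_k$. I apply Lemma~\ref{T:lemma2} to the arrays $x_k = b_{j_k}$, $y_k = a_{j_k}$: the contrapositive of its second part (since the ratios in $J_n$ are not all equal) yields at least one strictly negative $z_\ell$, i.e., an index $j_\ell \in J_n$ with $r_{j_\ell} > V$; the identity $\sum_k z_k = 0$ then forces some $j_m \in J_n$ with $r_{j_m} < V$. Consequently the greedy's first pick satisfies $r_{i_1} = \min_k r_k \leq r_{j_m} < V$.

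I then argue by contradiction: suppose $i_1 \notin J_n$, fix $j_L \in J_n$ with $r_{j_L} > V$, and consider $J'_n := (J_n \setminus \{j_L\}) \cup \{i_1\}$. Writing $A,B$ for the $J_n$-sums so that $A - VB = 0$, a short calculation gives
\[
\sum_{k \in J'_n} a_k \;-\; V \sum_{k \in J'_n} b_k \;=\; b_{i_1}(r_{i_1}-V) \;-\; b_{j_L}(r_{j_L}-V),
\]
and both terms on the right are strictly negative because $r_{i_1} < V < r_{j_L}$ and all $b$'s are positive. Hence the sum ratio of $J'_n$ is strictly less than $V$, contradicting optimality of $J_n$. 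Thus $i_1 \in J_n \cap I_n$, proving statement~1.

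For statement~2, if all $r_{j_k} = C$ then $V = C$, and the same swap trick rules out any element with $r_k < C$ (such a $k$ could be swapped into $J_n$ to beat $V$), so $\min_k r_k = C$ and the greedy's first pick gives $q_1 = C$. An induction then preserves $q_m = C$ at every step: assuming $q_{m-1}=C$, the mediant $(A_{m-1}+a_k)/(B_{m-1}+b_k)$ equals $C$ precisely when $r_k = C$ and otherwise strictly exceeds $C$, and at step $m$ at least $n-(m-1)\geq 1$ ratio-$C$ elements of $J_n$ remain unpicked, so the argmin is attained at one of them. Therefore $q_n = V$ and the greedy is exact. The single non-routine ingredient is the swap step, whose success rests entirely on Lemma~\ref{T:lemma2}'s guarantee that the element-wise ratios of $J_n$ straddle $V$ whenever they are not all equal.
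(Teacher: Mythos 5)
Your proof is correct, and for the first statement it takes a genuinely different and in fact sharper route than the paper. The paper proves $J_n\cap I_n\ne\varnothing$ by contradiction, chaining the greedy optimality conditions at iterations $n-1$ and $n-2$ with the monotonicity of Lemma~\ref{T:lemma1}, an accumulation over $l\in J_n$, and Lemma~\ref{T:lemma2}; the intersection it exhibits is anonymous. You instead apply Lemma~\ref{T:lemma2} only to $J_n$ itself to conclude that, when the ratios $r_{j}=a_j/b_j$ on $J_n$ are not all equal, they must straddle the optimal value $V$ (some $r_{j_m}<V<r_{j_L}$), and then run a one-element exchange: since $r_{i_1}=\min_k r_k\leqslant r_{j_m}<V$, the identity $\sum_{k\in J'_n}a_k-V\sum_{k\in J'_n}b_k=b_{i_1}(r_{i_1}-V)-b_{j_L}(r_{j_L}-V)<0$ shows that if $i_1\notin J_n$ the swapped set beats $J_n$. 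This proves the strictly stronger conclusion that $i_1\in J_n$ for every optimal $J_n$ whose ratios are not all equal, uses nothing about the greedy method beyond its first step, dispenses with Lemma~\ref{T:lemma1} and the case split at $n>N/2$, and is considerably shorter; it also partially answers the paper's closing question about whether the estimate $I_n\cap J_n\ne\varnothing$ can be improved. What the paper's longer argument buys is engagement with the later greedy iterations, but it extracts no stronger conclusion from them. Your proof of the second statement (showing $\min_k r_k=C=V$ via the same exchange, then inducting on $q_m=C$ using the mediant identity and the availability of an unpicked ratio-$C$ element of $J_n$ at each step) is essentially the paper's argument and is likewise correct.
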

\begin{proof}

\textbf{Proof of the first statement.}
The statement is obvious for the trivial case $n > \frac{N}{2}$. So below we consider the case when $n\leqslant \frac{N}{2}$.
Denote $S = \{1, 2, ..., N\}$ and 
\[
 p = a_{i_1}+a_{i_2}+...+a_{i_{n-2}}+a_{i_{n-1}}, 
\]
\[
 q = b_{i_1}+b_{i_2}+...+b_{i_{n-2}}+b_{i_{n-1}}.
\]
If $n=2$, everywhere below terms $a_{i_1}+a_{i_2}+... + a_{i_{n-2}}$ and $b_{i_1}+b_{i_2} + ... + b_{i_{n-2}}$ are treated as zeros and $I_{0} = \varnothing$.

We shall prove this theorem by contradiction. That means assuming $J_n$ and $I_n$ do not have a single index in common. Then
\begin{equation}\label{E:3}
 \frac{p+a_k}{q + b_k}\geqslant\frac{a_{j_1}+a_{j_2}+...+a_{j_n}}{b_{j_1}+b_{j_2}+... + b_{j_n}},
\end{equation}
for any $k \in S\setminus I_{n-1}$, or
\[
 q \leqslant \frac{b_{j_1}+b_{j_2}+... + b_{j_n}}{a_{j_1}+a_{j_2}+...+a_{j_n}}p + \frac{a_k(b_{j_1}+b_{j_2}+... + b_{j_n})-b_k(a_{j_1}+a_{j_2}+...+a_{j_n})}{a_{j_1}+a_{j_2}+...+a_{j_n}}.
\]
for any $k \in S\setminus I_{n-1}$.
In addition to this inequality, we have 
\[
 \frac{p}{q}\leqslant\frac{a_{i_1}+a_{i_2}+...+a_{i_{n-2}}+a_l}{b_{i_1}+b_{i_2}+...+b_{i_{n-2}}+b_l}
\]
for any $l \in S\setminus I_{n-2}$, due to the greedy index selection at iteration $n-1$.
Then \[
      q \geqslant \frac{b_{i_1}+b_{i_2}+...+b_{i_{n-2}}+b_l}{a_{i_1}+a_{i_2}+...+a_{i_{n-2}}+a_l}p
     \]
for any $l \in S\setminus I_{n-2}$.
From here, it follows
\[
 \frac{b_{i_1}+b_{i_2}+...+b_{i_{n-2}}+b_l}{a_{i_1}+a_{i_2}+...+a_{i_{n-2}}+a_l}p\leqslant
\]
\[
 \leqslant \frac{b_{j_1}+b_{j_2}+... + b_{j_n}}{a_{j_1}+a_{j_2}+...+a_{j_n}}p + \frac{a_k(b_{j_1}+b_{j_2}+... + b_{j_n})-b_k(a_{j_1}+a_{j_2}+...+a_{j_n})}{a_{j_1}+a_{j_2}+...+a_{j_n}}
\]
for any pair $l,k \in S\setminus I_{n-1}$.
The right-hand side of the above inequality is a linear function of $p$. In this function, the slope is fixed, but the free term varies depending on index $k$. Since $J_n\subset S\setminus I_{n-1}$, we can employ Lemma \ref{T:lemma2} to conclude that at least for one index $k$ from $S\setminus I_{n-1}$ the free term is strictly negative.

Therefore, for this inequality to hold, it is necessary that
\[
 \frac{b_{i_1}+b_{i_2}+...+b_{i_{n-2}}+b_l}{a_{i_1}+a_{i_2}+...+a_{i_{n-2}}+a_l}<\frac{b_{j_1}+b_{j_2}+... + b_{j_n}}{a_{j_1}+a_{j_2}+...+a_{j_n}}
\]
for any $l\in S\setminus I_{n-1}$. In other words,
\[
 (b_{i_1}+b_{i_2}+...+b_{i_{n-2}})(a_{j_1}+a_{j_2}+...+a_{j_n})-(b_{j_1}+b_{j_2}+... + b_{j_n})(a_{i_1}+a_{i_2}+...+a_{i_{n-2}})<
 \]
 \[
 < a_l(b_{j_1}+b_{j_2}+... + b_{j_n})-b_l(a_{j_1}+a_{j_2}+...+a_{j_n}).
\]
By accumulating this inequality over all $l \in J_n = \{j_1, j_2, ..., j_n\}$, we end up with
\begin{equation}\label{E:4}
\begin{split}
 (b_{i_1}+b_{i_2}+...+b_{i_{n-2}})(&a_{j_1}+a_{j_2}+...+a_{j_n})\\-(&b_{j_1}+b_{j_2}+... + b_{j_n})(a_{i_1}+a_{i_2}+...+a_{i_{n-2}})<0.
 \end{split}
\end{equation}
If $n = 2$, that means $0 < 0$, and the proof by contradiction is complete. From here on, we assume $n>2$.
Due to the greedy index selection at interation $n-2$, we have
\[
 \frac{a_{i_1}+a_{i_2}+...+a_{i_{n-2}}}{b_{i_1}+b_{i_2}+...+b_{i_{n-2}}}\leqslant \frac{a_{i_1}+a_{i_2}+...+a_{i_{n-3}}+a_{l}}{b_{i_1}+b_{i_2}+...+b_{i_{n-3}}+b_{l}}
\]
for any $l\in S\setminus I_{n-3}$, or
\[
 (a_{i_1}+a_{i_2}+...+a_{i_{n-2}})(b_{i_1}+b_{i_2}+...+b_{i_{n-3}})-(a_{i_1}+a_{i_2}+... + a_{i_{n-3}})(b_{i_1}+b_{i_2}+...+b_{i_{n-2}})\leqslant
 \]
 \[
 \leqslant a_l(b_{i_1}+b_{i_2}+... + b_{i_{n-2}})-b_l(a_{i_1}+a_{i_2}+...+a_{i_{n-2}}).
\]
By accumulating this inequality over all $l\in J_n$, we end up with
\[
\begin{split}
 m(a_{i_1}+a_{i_2}+...+a_{i_{n-2}})(&b_{i_1}+b_{i_2}+...+b_{i_{n-3}})\\-m(&a_{i_1}+a_{i_2}+... + a_{i_{n-3}})(b_{i_1}+b_{i_2}+...+b_{i_{n-2}})\leqslant\\
 \end{split}
 \]
 \[
 \leqslant(b_{i_1}+b_{i_2}+...+b_{i_{n-2}})(a_{j_1}+a_{j_2}+...+a_{j_n})-(b_{j_1}+b_{j_2}+... + b_{j_n})(a_{i_1}+a_{i_2}+...+a_{i_{n-2}}).
\]
Thanks to \eqref{E:4}, the right-hand side of the above inequality is strictly bounded by zero. If $n=3$, we again arrive at $0<0$, thus completing the proof. If $n > 3$, we get
\[
 \frac{a_{i_1}+a_{i_2}+...+a_{i_{n-2}}}{b_{i_1}+b_{i_2}+...+b_{i_{n-2}}}<\frac{a_{i_1}+a_{i_2}+...+a_{i_{n-3}}}{b_{i_1}+b_{i_2}+...+b_{i_{n-3}}}.
\]
According to Lemma \ref{T:lemma1}, we obtain contradiction.
Therefore, sets $I_{n}$ and $J_{n}$ have at least one index in common.

\textbf{Proof of the second statement.} Let 
 \[
 \frac{a_{j_1}}{b_{j_1}} = \frac{a_{j_2}}{b_{j_2}} = ... = \frac{a_{j_n}}{b_{j_n}} = C.
\]
Since $J_n$ contains indices corresponding to the actual least fraction (one of), we have
\[
 C\leqslant \frac{a_{i_1}+a_{j_2} + ... + a_{j_n}}{b_{i_1} + b_{j_2} + ... + b_{j_n}} = \frac{a_{i_1}+C(b_{j_2}+b_{j_3}+... +b_{j_n})}{b_{i_1}+b_{j_2}+b_{j_3}+... +b_{j_n}}.
\]
From here,
\[
 \frac{a_{i_1}}{b_{i_1}}\geqslant C.
\]
But since $\frac{a_{j_1}}{b_{j_1}} = C$, at the first iteration the greedy method will definitely choose such index $i_1$, that $\frac{a_{i_1}}{b_{i_1}} = C$. 

Due to Lemma \ref{T:lemma1}, 
\[
 \frac{a_{i_1}+a_{i_2}}{b_{i_1}+b_{i_2}}\geqslant C.
\]
Once again, since 
\[
  \frac{a_{i_1}+a_{j_2}}{b_{i_1}+b_{j_2}}= C
\]
the second iteration of the greedy method will arrive at such index set $I_{2}$, that $\frac{a_{i_1}+a_{i_2}}{b_{i_1}+b_{i_2}} = C$. Continuing this way until we complete all $n$ iterations, we finally arrive at 
\[
 \frac{a_{i_1}+a_{i_2}+... +a_{i_n}}{b_{i_1}+b_{i_2}+... +b_{i_n}} = C,
\]
which is the value of the least fraction.
\end{proof}
\begin{proposition}\label{T:prop}
Let the conditions of the first statement of the theorem \ref{T:main} hold for $n = 2$. Then the greedy algorithm is exact, meaning it provides the actual least fraction.
\end{proposition}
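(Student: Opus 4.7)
The plan is to invoke Theorem~\ref{T:main} (first statement) at $n = 2$ so that the greedy pair $I_2 = \{i_1, i_2\}$ and an optimal pair $J_2 = \{j_1, j_2\}$ share at least one index, and then close with a short case split powered by the mediant inequality. Order $J_2$ so that $\frac{a_{j_1}}{b_{j_1}} \leqslant \frac{a_{j_2}}{b_{j_2}}$; the proposition's hypothesis makes this ordering strict. Greedy's first step yields $\frac{a_{i_1}}{b_{i_1}} \leqslant \frac{a_{j_1}}{b_{j_1}}$, which combined with the strict ordering rules out the subcase $i_1 = j_2$ immediately.

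In the case $i_1 = j_1$, the argmin rule of greedy's second step applied with comparison index $j_2$ directly bounds the greedy value by the optimal value, so optimality of $J_2$ forces equality. In the remaining case $i_1 \notin J_2$, both $j_1$ and $j_2$ are distinct from $i_1$, so the argmin rule with comparison index $j_1$ gives $\frac{a_{i_1}+a_{i_2}}{b_{i_1}+b_{i_2}} \leqslant \frac{a_{i_1}+a_{j_1}}{b_{i_1}+b_{j_1}}$; two applications of the mediant inequality then produce $\frac{a_{i_1}+a_{j_1}}{b_{i_1}+b_{j_1}} \leqslant \frac{a_{j_1}}{b_{j_1}} \leqslant \frac{a_{j_1}+a_{j_2}}{b_{j_1}+b_{j_2}}$, where the left mediant uses $\frac{a_{i_1}}{b_{i_1}} \leqslant \frac{a_{j_1}}{b_{j_1}}$ and the right uses $\frac{a_{j_1}}{b_{j_1}} \leqslant \frac{a_{j_2}}{b_{j_2}}$. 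Chaining these bounds shows the greedy value is no larger than the optimum, and optimality of $J_2$ then forces equality.

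The main obstacle is really just bookkeeping: ensuring the case split is exhaustive and pinpointing exactly where the hypothesis of the proposition enters, namely to eliminate the awkward subcase $i_1 = j_2$ in which the mediant chain through $\frac{a_{j_1}}{b_{j_1}}$ would collapse. The inequality manipulations themselves are all one-line cross-multiplications of positive quantities.
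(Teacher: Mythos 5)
Your proof is correct, but it follows a genuinely different route from the paper's. The paper argues by contradiction: it invokes Theorem~\ref{T:main} to place the common index (writing $j_1 = i_2$), assumes $j_2 \ne i_1$, and then combines the global optimality inequality $\frac{a_{i_1}+a_l}{b_{i_1}+b_l}\geqslant\frac{a_{j_1}+a_{j_2}}{b_{j_1}+b_{j_2}}$ for all $l\ne i_1$ with the first-step greedy inequalities $a_{i_1}b_{i_2}\leqslant a_{i_2}b_{i_1}$ and $a_{i_1}b_{j_2}\leqslant a_{j_2}b_{i_1}$; substituting $l=i_2$ and $l=j_2$ forces $\frac{a_{i_2}}{b_{i_2}}=\frac{a_{j_2}}{b_{j_2}}$, contradicting the hypothesis and yielding the set equality $I_2=J_2$. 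You instead argue directly, splitting on whether $i_1=j_1$, $i_1=j_2$, or $i_1\notin J_2$, and in the last case chaining the second-step argmin bound with two mediant inequalities to show the greedy value is at most the optimum. Two observations: first, your case split on the location of $i_1$ is exhaustive on its own, so your announced invocation of Theorem~\ref{T:main} is actually superfluous --- your proof is self-contained and more elementary, which is a genuine advantage. Second, your explicit handling of the case $i_1\in J_2$ covers ground the paper glosses over (the paper asserts the shared index is $i_2$ without discussing the easy possibility that it is $i_1$). What the paper's route buys is the stronger-looking conclusion $I_2=J_2$ as sets and a demonstration that the machinery of Theorem~\ref{T:main} suffices; what yours buys is brevity, directness, and independence from the theorem. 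Both correctly locate where the hypothesis enters: in the paper it kills the terminal equality $\frac{a_{i_2}}{b_{i_2}}=\frac{a_{j_2}}{b_{j_2}}$, in yours it eliminates the subcase $i_1=j_2$.
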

\begin{proof}
Denote by $\{i_1, i_2\}$ the index set constructed by the greedy algorithm. Suppose index set $\{j_1, j_2\}$ corresponds to the actual least fraction of all $\frac{(N-1)N}{2}$ possible ones. 
From Theorem \ref{T:main}, one of those, say, $j_1$, must equal $i_2$. 

We again shall use the method of contradiction. Assume $j_{2} \ne i_{1}$.

We have
\[
 \frac{a_{i_1}+a_{l}}{b_{i_1}+b_{l}}\geqslant \frac{a_{i_2} + a_{j_2}}{a_{i_2} + a_{j_2}}
\]
for any index $l$, as long as $l\ne i_1$. Thefore, 
\[
 (a_{i_2} + a_{j_2})b_l - (a_{i_2} + a_{j_2})a_l \leqslant a_{i_1}b_{j_2}-a_{j_2}b_{i_1} + a_{i_1}b_{i_2}-a_{i_2}b_{i_1}.
\]
Since, due to the greedy index selection at the first iteration of Algorithm \ref{algo:alg}, we have
\[
 a_{i_1}b_{i_2}\leqslant a_{i_2}b_{i_1}
\]
and
\[
 a_{i_1}b_{j_2}\leqslant a_{j_2}b_{i_1},
\]
it follows that
\[
 (a_{i_2} + a_{j_2})b_l - (a_{i_2} + a_{j_2})a_l\leqslant 0
\]
for any $l\ne i_1$. Plug $l = i_2$ and $l = j_2$. Then we obtain two inequalities:
\[
 a_{j_2}b_{i_2}\leqslant a_{i_2}b_{j_2}\leqslant a_{j_2}b_{i_2}.
\]
Thus $\frac{a_{i_2}}{b_{i_2}} = \frac{a_{j_2}}{b_{j_2}}$. But this is not possible due to the assumption of the propostion. So $j_{2} = i_{1}$ and the proposition is proven.
\end{proof}

Theorem \ref{T:main} puts a restriction on a position of the least fraction in a sense that its indices must intersect with those constructed by the greedy method. It reduces the number of possibilities that we would have to cover if we were to use the brute force search procedure: from $\frac{N!}{n!(N-n)!}$ to $\frac{N!}{n!(N-n)!}-\frac{(N-n)!}{n!(N-2n)!}$.

We conclude this work by noting that despite providing only a seemingly weak restriction for the location of the least fraction based on the greedy algorithm, we are aware of the fact that the greedy method should not, in general, provide the exact solution to the minimization problem, i.e. $I_{n} \ne J_{n}$ in general, which slightly excuses Theorem \ref{T:main} for its pessimistic estimate. A concise example of the inexactness of the greedy method is below:
\[
 a = \{1, 3, 6, 4\}, ~~~b =\{10, 3, 12, 6\}.
\]
Clearly, when $n = 3$, the greedy method chooses $i_1 = 1$, $i_2 = 2$, $i_3 = 3$. The minimal fraction is, however, located at indices $j_1 = 1$, $j_2 = 3$, $j_3 = 4$, which is easy to verify.

A future work on the subject may involve obtaining better estimates for the position of the minimal fraction based on the result of the greedy method or proving that the estimate $I_{n}\cap J_{n}\ne\varnothing$ cannot in general be improved. Finally, obtaining upper bounds for the difference between the greedily selected fraction and the actual least fraction is another potential research direction.

\bibliographystyle{unsrt}
\bibliography{mybib}

\end{document}